\tikzset{every picture/.style={line width=0.75pt}} 
\newtheorem{thm}{Theorem}[section]
\theoremstyle{definition}
\newtheorem{defn}{Definition}[section]
\numberwithin{equation}{section}
\DeclareMathOperator{\detl}{detl}
\DeclareMathOperator{\gdet}{gdet}
\DeclareMathOperator{\vdet}{vdet}
\DeclareMathOperator{\sgn}{sgn}
\begin{document}
\title[Generalised determinants: a review]{Generalisations of the determinant to interdimensional transformations: a review}
\author[Abhimanyu Pallavi Sudhir]{Abhimanyu Pallavi Sudhir}
\address{Department of Mathematics\\
Imperial College London\\
180 Queen's Gate, South Kensington Campus\\
London -- SW7 2AZ, UNITED KINGDOM}
\subjclass{15A15, 15A66}
\keywords{linear algebra, clifford algebra, determinant, matrix, exterior algebra}
\date{May 11, 2013}

\begin{abstract}
Significant research has been carried out in the past half-century on defining generalised determinants for transformations between (typically real) vector spaces of different dimensions. We review three different generalisations of the determinant to non-square matrices, that we term for convenience the determinant-like function \cite{Pasu1}, the vector determinant \cite{Pyle} and the g-determinant \cite{Radic1}. We introduce and motivate these generalisations, note certain formal similarities between them and discuss their known properties.
\end{abstract}

\maketitle

\section{Introduction}

Several generalisations of the determinant to domains consisting of non-square matrices are known, including the determinant-like function \cite{Pasu1}, the vector determinant \cite{Pyle}, and the g-determinant \cite{Radic1}. Each of these generalisations is defined based on different defining properties of the square determinant.

\begin{equation}\label{detl-def}
\detl (\vec{c}_1,\ldots \vec{c}_n)  \triangleq
\left|\bigwedge_j\vec{c}_j\right|
\end{equation}

Notably, the determinant-like function (Eq.~\ref{detl-def}) has perhaps the clearest geometric description (it is the volume of the image (in $\mathbb{R}^m$) of the unit $n$-cube under a linear transformation) -- however, it is also the most limited. The function is an \emph{unsigned} determinant, and is only defined for tall matrices (i.e. linear transformations to a higher-dimensional space). Both limitations arise directly from the geometric interpretation -- the function must be unsigned, as determining the handedness of a general $n$-volume in $\mathbb{R}^m$ depends on the introduction of additional vectors; and an extension of the geometric interpretation to wide matrices (transformations to a lower-dimensional space) would require the function to be uniformly zero.

Algebraically, the determinant-like function can be calculated as (we will derive this expression in Sec.~\ref{sec:detl}):

\begin{equation}\label{detl}
\detl A = \left[\sum_{i_1<\ldots i_n} {\det}^{2} A_{i_1,\ldots i_n}\right]^{1/2}
\end{equation}

Where $A_{i_1,\ldots i_n}$ are the ``maximal square submatrices'' of $A$, or the $n$ by $n$ submatrices of $A$. In fact, these submatrices also appear in the expressions for the vector determinant and the g-determinant. 

\begin{equation}\label{vdet}
\vdet A = \sum_{i_1<\ldots  i_n} \det A_{i_1,\ldots i_n} \vec{e}_{\tau(i_1,\ldots i_n)}
\end{equation}

The vector determinant Eq.~\eqref{vdet} is -- for some bijection $\tau$ to $\left[1,\binom{m}{n}\right]$ from the set of ascending $n$-tuples of integers $i_1,\ldots i_n$ ($1 \le i_j \le m$) -- a vector-valued function $\vdet: \mathbb{R}^{m\times n}\to \mathbb{R}^{\binom{m}{n}}$ for $m \ge n$. 

One might note that Eq.~\eqref{detl} is simply the magnitude of the vector in Eq.~\eqref{vdet} \cite{Pasu2} -- indeed, in this sense the vector determinant might be thought of as a way to ``mitigate'' the limitations of th determinant-like function. This relation will be discussed in further detail in Sec.~\ref{sec:vdet-disc}.

(Note that in the original reference \cite{Pyle}, the left-multiplication convention was adopted, i.e. an $m\times n$ matrix represented a transformation from $\mathbb{R}^m\to\mathbb{R}^n$, and thus the vector determinant was originally claimed to apply to ``wide'' rather than ``tall'' matrices -- however, we adopt the opposite convention for the sake of uniformity.)

The g-determinant \cite{Radic1} (a term that first appeared in \cite{Radic2}), defines the determinant of a tall matrix recursively through Laplace's expansion, with the determinant of a $m$ by 1 matrix as the base case:

\begin{equation}\label{gdet-base}
\gdet (a_1,\ldots a_m)^T = a_1 - a_2 + \ldots  + (-1)^{m + 1} a_m
\end{equation}

With a recursion relation given by the Laplace expansion:
\begin{equation}\label{gdet-ih}
\gdet A = \sum_{i = 1}^m (- 1)^{i + 1} a_{i1} \gdet A_{\{i\}^C}^{\{1\} ^C}
\end{equation}

Where $A_S^T$ refers to the submatrix of $A$ comprised of the rows in $S$ and the columns in $T$, and $S^C$ is the complement. It is then easy to show (and will be shown in Sec.~\ref{sec:gdet}) inductively that the general expression for the g-determinant of an $m$ by $n$ matrix ($m \ge n$) is given by:

\begin{equation}\label{gdet}
\gdet A = \sum_{i_1<\ldots i_n}(-1)^{\sum_j i_j + j}\det A_{i_1,\ldots i_n}
\end{equation}

\section{The determinant-like function}\label{sec:detl}

On the subject of determinants of non-square matrices, it is clear that an interpretation in terms of volume requires the determinant of a ``wide'' matrix (one with more columns than rows) to be zero, as it is a transformation from a higher-dimensional space into a lower-dimensional one. However, one may still assign a non-trivial volume-based interpretation to the determinant of a tall matrix -- despite being a linear map into a higher-dimensional space, such a transformation is not surjective (its rank is still at most the dimension of the domain).

\begin{figure}\label{fig:M1}
\begin{tikzpicture}[x=0.75pt,y=0.75pt,yscale=-0.70,xscale=0.75]

\draw  (207.39,241.4) -- (502.35,241.4)(344.84,144.2) -- (224.89,252.2) (500.9,236.4) -- (502.35,241.4) -- (489.8,246.4) (332.07,151.2) -- (344.84,144.2) -- (342.07,151.2)  ;
\draw    (236.3,259.2) -- (233.33,34.2) ;
\draw [shift={(233.3,32.2)}, rotate = 449.24] [color={rgb, 255:red, 0; green, 0; blue, 0 }  ][line width=0.75]    (10.93,-3.29) .. controls (6.95,-1.4) and (3.31,-0.3) .. (0,0) .. controls (3.31,0.3) and (6.95,1.4) .. (10.93,3.29)   ;

\draw [color={rgb, 255:red, 0; green, 0; blue, 255 }  ,draw opacity=1 ]   (236.89,241.4) -- (284.6,114.07) ;
\draw [shift={(285.3,112.2)}, rotate = 470.54] [color={rgb, 255:red, 0; green, 0; blue, 255 }  ,draw opacity=1 ][line width=0.75]    (10.93,-3.29) .. controls (6.95,-1.4) and (3.31,-0.3) .. (0,0) .. controls (3.31,0.3) and (6.95,1.4) .. (10.93,3.29)   ;

\draw  [draw opacity=0][fill={rgb, 255:red, 255; green, 0; blue, 0 }  ,fill opacity=0.25 ] (292.67,191.2) -- (376.86,191.2) -- (321.08,241.4) -- (236.89,241.4) -- cycle ;
\draw [color={rgb, 255:red, 255; green, 9; blue, 9 }  ,draw opacity=1 ][fill={rgb, 255:red, 255; green, 220; blue, 200 }  ,fill opacity=1 ]   (236.89,241.4) -- (291.81,192.53) ;
\draw [shift={(293.3,191.2)}, rotate = 498.33] [color={rgb, 255:red, 255; green, 9; blue, 9 }  ,draw opacity=1 ][line width=0.75]    (10.93,-3.29) .. controls (6.95,-1.4) and (3.31,-0.3) .. (0,0) .. controls (3.31,0.3) and (6.95,1.4) .. (10.93,3.29)   ;

\draw [color={rgb, 255:red, 255; green, 9; blue, 9 }  ,draw opacity=1 ][fill={rgb, 255:red, 255; green, 220; blue, 200 }  ,fill opacity=1 ]   (240.67,241.6) -- (319.08,241.4) ;
\draw [shift={(321.08,241.4)}, rotate = 539.86] [color={rgb, 255:red, 255; green, 9; blue, 9 }  ,draw opacity=1 ][line width=0.75]    (10.93,-3.29) .. controls (6.95,-1.4) and (3.31,-0.3) .. (0,0) .. controls (3.31,0.3) and (6.95,1.4) .. (10.93,3.29)   ;

\draw [color={rgb, 255:red, 255; green, 130; blue, 130 }  ,draw opacity=1 ][fill={rgb, 255:red, 255; green, 220; blue, 200 }  ,fill opacity=1 ]   (293.3,191.2) -- (376.3,192.2) ;

\draw [color={rgb, 255:red, 255; green, 130; blue, 130 }  ,draw opacity=1 ][fill={rgb, 255:red, 255; green, 220; blue, 200 }  ,fill opacity=1 ]   (321.08,241.4) -- (376.3,192.2) ;

\draw [color={rgb, 255:red, 0; green, 0; blue, 255 }  ,draw opacity=1 ]   (240.67,241.6) -- (398.13,158.45) ;
\draw [shift={(399.9,157.52)}, rotate = 512.1600000000001] [color={rgb, 255:red, 0; green, 0; blue, 255 }  ,draw opacity=1 ][line width=0.75]    (10.93,-3.29) .. controls (6.95,-1.4) and (3.31,-0.3) .. (0,0) .. controls (3.31,0.3) and (6.95,1.4) .. (10.93,3.29)   ;

\draw [color={rgb, 255:red, 130; green, 130; blue, 255 }  ,draw opacity=1 ][fill={rgb, 255:red, 255; green, 220; blue, 200 }  ,fill opacity=1 ]   (399.1,158.21) -- (448.34,27.14) ;

\draw [color={rgb, 255:red, 130; green, 130; blue, 255 }  ,draw opacity=1 ][fill={rgb, 255:red, 255; green, 220; blue, 200 }  ,fill opacity=1 ]   (285.3,112.2) -- (448.34,27.14) ;

\draw  [draw opacity=0][fill={rgb, 255:red, 0; green, 0; blue, 255 }  ,fill opacity=0.25 ] (285.21,112.78) -- (448.34,27.14) -- (399.1,158.21) -- (235.97,243.86) -- cycle ;

\draw (436,158) node [color={rgb, 255:red, 0; green, 0; blue, 255 }  ,opacity=1 ]  {$\vec{a}_{1} \in \mathbb{R}^3$};
\draw (283,91) node [color={rgb, 255:red, 0; green, 0; blue, 255 }  ,opacity=1 ]  {$\vec{a}_{2}$};
\draw (315,259) node [color={rgb, 255:red, 255; green, 0; blue, 0 }  ,opacity=1 ]  {$\vec{e}_{1} \in \mathbb{R}^2$};
\draw (282,182) node [color={rgb, 255:red, 255; green, 0; blue, 0 }  ,opacity=1 ]  {$\vec{e}_{2}$};
\end{tikzpicture}
\caption{A transformation $A:\mathbb{R}^2\to\mathbb{R}^3$}
\end{figure}
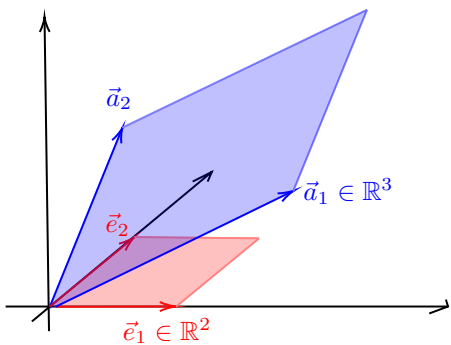

It is reasonable to define a determinant-like function for a tall matrix $A:\mathbb{R}^n\to\mathbb{R}^m$ as the $n$-volume of the image in $m$-dimensional space of the unit $n$-form (see Fig.~\ref{fig:M1} for an example). Formally, one may generalise the following exterior algebraic property of the standard determinant (for a matrix of $n$ vectors in $\mathbb{R}^n$):

\begin{equation}\label{exterior-prop}
    \left| \det (\vec{c}_1,\ldots \vec{c}_n) \right| = \left|\bigwedge_i\vec{c}_i\right|
\end{equation}

To the \emph{determinant-like function} $\detl$ that satisfies the following natural generalisation:

\begin{defn}\label{def:D1}
The determinant-like function of a matrix comprised of columns $\vec{c}_1,\ldots \vec{c}_n$ with each $c_j\in\mathbb{R}^m$ ($m \ge n$) is defined as the necessarily unsigned $n$-volume of the $n$-form spanned by these columns, i.e.
\begin{equation*}
    \detl (\vec{c}_1,\ldots \vec{c}_n)  \triangleq
    \left|\bigwedge_j\vec{c}_j\right|
\end{equation*}
\end{defn}

We will treat Definition~\ref{def:D1} as the definition of the determinant-like function for tall matrices, and proceed to find a computable, algebraic expression for the function.

\subsection{A computable expression for the determinant-like function}

With Definition~\ref{def:D1}, one may easily calculate the determinant-like function of elementary examples. Here, we produce a general usable expression for any tall matrix. In doing so, we correct the erroneous factor of $\sqrt{|m-n|!}$ included in the original reference \cite{Pasu1}.

\begin{thm}\label{thm:T1}
The unsigned determinant-like function of a tall $m$ by $n$ matrix $A$ is given by the following expression in terms of the $\binom{m}{n}$ $n$ by $n$ square submatrices of $A$ (here, $A_S$ where $S\subseteq\{1,\ldots m\}$, is the submatrix of $A$ whose rows are those indexed in $S$):
\begin{equation*}
\detl A = \left(\sum_{i_1<\ldots  i_n} {\det}^{2} A_{i_1,\ldots i_n}\right)^{1/2}
\end{equation*}
(Note that by convention, $1\le i_j \le m$ for $1 \le j \le n$.)
\end{thm}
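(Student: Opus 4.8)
The plan is to compute $\left|\bigwedge_j \vec{c}_j\right|$ directly by expanding the wedge product in the standard basis of $\Lambda^n(\mathbb{R}^m)$ and taking its norm with respect to the natural induced inner product. First I would recall that $\{\vec{e}_{i_1}\wedge\cdots\wedge\vec{e}_{i_n} : i_1<\cdots<i_n\}$ is an orthonormal basis of $\Lambda^n(\mathbb{R}^m)$ when $\Lambda^n(\mathbb{R}^m)$ is equipped with the inner product for which $\langle \vec{u}_1\wedge\cdots\wedge\vec{u}_n, \vec{v}_1\wedge\cdots\wedge\vec{v}_n\rangle = \det(\langle \vec{u}_i,\vec{v}_j\rangle)$; orthonormality of this basis is exactly the statement that $\det$ of a permutation-type Gram matrix is $0$ or $\pm 1$ appropriately. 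Writing each column as $\vec{c}_j = \sum_{i=1}^m a_{ij}\vec{e}_i$ and multilinearly expanding $\bigwedge_j \vec{c}_j$, the coefficient of $\vec{e}_{i_1}\wedge\cdots\wedge\vec{e}_{i_n}$ (for a fixed ascending tuple) collects all terms coming from the $n!$ orderings of those indices, and by antisymmetry of the wedge this coefficient is precisely $\sum_{\sigma\in S_n}\sgn(\sigma)\prod_j a_{i_{\sigma(j)},j} = \det A_{i_1,\ldots,i_n}$.

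Next I would invoke Pythagoras in the orthonormal basis: since $\bigwedge_j\vec{c}_j = \sum_{i_1<\cdots<i_n}(\det A_{i_1,\ldots,i_n})\,\vec{e}_{i_1}\wedge\cdots\wedge\vec{e}_{i_n}$, its norm squared is the sum of the squares of the coefficients, giving $\left|\bigwedge_j\vec{c}_j\right|^2 = \sum_{i_1<\cdots<i_n}\det^2 A_{i_1,\ldots,i_n}$, and taking the square root yields the claimed formula. The one point requiring care is that the norm $|\cdot|$ appearing in Definition~\ref{def:D1} must be interpreted as the volume-inducing norm on $\Lambda^n$, i.e. the one making the elementary wedges of orthonormal vectors have unit length; I would state this explicitly, since it is exactly what makes $\detl$ agree with the ordinary $|\det|$ in the square case $m=n$ (where the sum has a single term, $\det^2 A$). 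As a cross-check I would verify the $n=1$ case, where the formula reduces to $\detl(\vec c_1) = \left(\sum_i a_{i1}^2\right)^{1/2} = |\vec c_1|$, the ordinary Euclidean length, consistent with the geometric picture.

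The main obstacle — really the only subtle step — is establishing that the induced inner product on $\Lambda^n(\mathbb{R}^m)$ is the geometrically correct one, i.e. that $\left|\bigwedge_j \vec c_j\right|$ under this inner product genuinely equals the $n$-dimensional volume of the parallelepiped spanned by $\vec c_1,\ldots,\vec c_n$. This can be handled either by appealing to the Gram determinant formula for $n$-volume, $\mathrm{vol}^2 = \det(\langle \vec c_i,\vec c_j\rangle) = \det(A^T A)$, together with the Cauchy–Binet identity $\det(A^T A) = \sum_{i_1<\cdots<i_n}\det^2 A_{i_1,\ldots,i_n}$ — which gives the theorem essentially immediately and bypasses the wedge-product bookkeeping — or by taking the volume interpretation of Definition~\ref{def:D1} at face value and simply proving the Cauchy–Binet identity. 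I would present the Cauchy–Binet route as the clean proof (a short induction on $m$, or a direct expansion of $\det(A^TA)$ grouping terms by which rows of $A$ are selected), and mention the exterior-algebra computation above as the conceptual explanation of why the submatrices $A_{i_1,\ldots,i_n}$ are the natural objects, and of why the spurious $\sqrt{|m-n|!}$ factor in \cite{Pasu1} does not belong.
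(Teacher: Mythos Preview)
Your exterior-algebra computation in the first two paragraphs is essentially identical to the paper's proof: expand $\bigwedge_j \vec c_j$ multilinearly, collect the $n!$ orderings of each ascending index tuple via antisymmetry to recognise $\det A_{i_1,\ldots,i_n}$ as the coefficient of the basis $n$-form, and then take the norm. The paper stops there, simply invoking $|X|=|\langle X,X\rangle|$ without further comment. Where you diverge is in your third paragraph: you flag the one genuinely non-trivial point the paper glosses over --- that the inner product on $\Lambda^n(\mathbb R^m)$ under which the elementary $n$-forms are orthonormal is the one that actually computes $n$-volume --- and you propose resolving it via the Gram-determinant formula $\mathrm{vol}^2=\det(A^TA)$ together with Cauchy--Binet. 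That alternative route is not in the paper at all; it is more self-contained (it does not presuppose familiarity with the induced inner product on exterior powers) and it makes the connection to Theorem~\ref{thm:T2} explicit, since Cauchy--Binet is exactly the $A=B$ case of $\det(A^TB)=(\vdet A)^T\vdet B$. Either presentation is fine; yours is more careful about foundations.
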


\begin{proof}
We attempt to simplify the wedge product of the columns of $A$ by expressing them in the standard basis of $\mathbb{R}^m$ and applying the distributive law (note that $\mathbf{e}_{i_1,\ldots i_n}:=\bigwedge_j\vec{e}_{i_j}$):

\begin{align*}
\bigwedge_j\sum_i a_{ij}\vec{e}_i
& = \sum_{i_1,\ldots i_n} \bigwedge_j a_{i_j j} \vec{e}_{i_j}\\
& = \sum_{i_1,\ldots i_n} \prod_j a_{i_j j} \mathbf{e}_{i_1,\ldots i_n}
\end{align*}

Grouping the $\mathbf{e}_{i_1,\ldots i_n}$ terms with their index permutations $\sigma \in S_n$ and applying the antisymmetric nature of the wedge product, one can rewrite this as a sum over $\binom{m}{n}$ ascending combinations of $n$ indices.

\begin{align*}
\ldots &= \sum_{i_1<\ldots i_n}\sum_\sigma \prod_j a_{i_{\sigma(j)}j}\mathbf{e}_{i_{\sigma(1)},\ldots i_{\sigma(n)}} \\
& = \sum_{i_1<\ldots i_n}\mathbf{e}_{i_1,\ldots i_n}\sum_\sigma \sgn(\sigma)\prod_j a_{i_{\sigma(j)}j} \\
& = \sum_{i_1<\ldots i_n}\mathbf{e}_{i_1,\ldots i_n} \det A_{i_1,\ldots i_n}
\end{align*}

The result then follows from taking the norm of this $n$-form, which is definitionally $|X| = \left|\left\langle X, X\right\rangle \right|$.

\end{proof}

\subsection{Discussion}
We note two special cases of Theorem~\ref{thm:T1} immediately: the $m$ by 1 case is the Pythagoras theorem in $m$ dimensions, as it measures the 1-volume (length) of a vector in terms of its projections onto each of the $m$ dimensions. Also notably, the $(n+1)\times n$ determinant is the magnitude of the $n$-vector cross product, as it measures the $n$-volume contained between them (for example the 3 by 2 case is simply the elementary-school cross product).

It is worth commenting on the question of the sign of the determinant-like function -- in general in $m$ dimensions, one may naturally give a sign to an $m$-form by defining a left-handed/right-handed convention. To define a sign on an $n$-form, one needs a notion of orientation, however, in $m$ dimensions, this depends on which remaining $m - n$ vectors you use to define this orientation (e.g. in 3 dimensions, one needs a third vector to ``look at'' two vectors and determine which vector is clockwise to which). Fixing such vectors would not make sense for our purposes (e.g. consider a case where the $n$-form is parallel to one of our fixed vectors -- the determinant of the $m$-form comprising of the $n$-form and the fixed vectors is zero, neither positive nor negative, and this does not describe a sign on our $n$-volume at all).

We consider the validity of the corresponding generalisations of some standard properties of the square determinant to the determinant-like function -- unlike the square determinant, the manner in which the determinant-like function treats its columns is fundamentally different from the way that it treats its rows. 
:

\begin{itemize}
    \item An $m$ by $n$ matrix with any linearly independent columns has a zero determinant-like function (as the $n$-form ``closes in''), but it needs more than $m - n$ rows to be linearly dependent in order for the determinant-like function to be zero. 
    \item The determinant-like function is linear in each of its columns -- however, multiplying a row by a scalar $k$ has no such predictable effect.
    \item Adding a linear multiple of a column to another column leaves the determinant-like function invariant, but doing so with the columns has no such predictable effect.
    \item The function \emph{is} invariant under switching both columns and rows (as the component square determinants are just permuted).
    \item In general for matrices $A: \mathbb{R}^n \to \mathbb{R}^m$ and $B: \mathbb{R}^o \to \mathbb{R}^n$ ($m \ge n \ge o$), the determinant-like function is \emph{not} multiplicative, i.e. $\detl AB \ne \detl A \detl B$, as $\detl A$ describes the volume transformation of an $n$-form and not an $o$-form (indeed, $o$-forms would not be similarly scaled under $A$).
\end{itemize}

The generalisation of Cramer's rule to the determinant-like function was the subject of \cite{Pasu3}. Specifically, in a solvable rank-$n$ system of $m$ linear equations in $n$ variables ($m \ge n$, as usual), represented by $Ax=b$, the components of $x$ satisfy:

\begin{equation}\label{detl-sol}
|x_j| = \frac{\detl (\vec{c}_1,\ldots \vec{c}_j,\vec{b},\vec{c}_{j+1},\ldots \vec{c}_n)}{\detl A}
\end{equation}

This can be proven in the following straightforward way: for the system to have a solution, $b$ must be expressible as a linear combination of the linearly independent columns of $A$ -- therefore, any $n$ by $n$ submatrix of $A$ has determinant zero if and only if the corresponding square submatrix of the matrix ``$A$ with its $j$\textsuperscript{th} column replaced with $b$'' has determinant zero, and the ratio of any non-zero determinants of corresponding submatrices is exactly the solution to the system (as the system can be solved by eliminating linearly dependent equations). The result follows.

One might wonder if considering the values of the solutions themselves (as opposed to the absolute values) in Eq.~\eqref{detl-sol} would allow us to define a sign on the determinant-like function. Such a definition is possible, but not natural -- it would be equivalent to defining an orientation on every $n$-dimensional subspace of $\mathbb{R}^m$, each subspace's definition independent of one another (see Fig.~\ref{fig:M2}). Notably the resulting ``signed determinant-like function'' would not be continuous under any chosen convention.

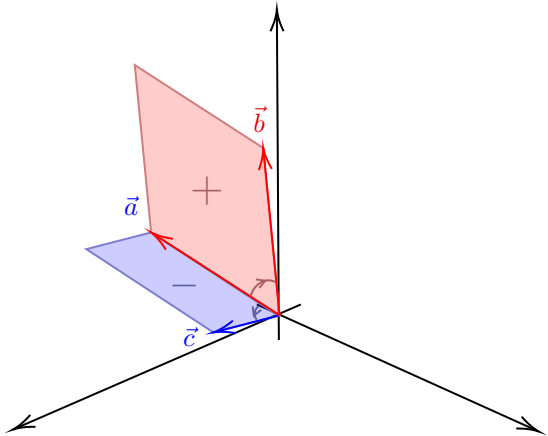
\begin{figure}\label{fig:M2}

\begin{tikzpicture}[x=0.75pt,y=0.75pt,yscale=-1,xscale=1]

\draw    (331.9,184.8) -- (330.91,19.8) ;
\draw [shift={(330.9,17.8)}, rotate = 449.66] [color={rgb, 255:red, 0; green, 0; blue, 0 }  ][line width=0.75]    (10.93,-3.29) .. controls (6.95,-1.4) and (3.31,-0.3) .. (0,0) .. controls (3.31,0.3) and (6.95,1.4) .. (10.93,3.29)   ;

\draw    (342.9,166.8) -- (200.73,229.24) ;
\draw [shift={(198.9,230.04)}, rotate = 336.28999999999996] [color={rgb, 255:red, 0; green, 0; blue, 0 }  ][line width=0.75]    (10.93,-3.29) .. controls (6.95,-1.4) and (3.31,-0.3) .. (0,0) .. controls (3.31,0.3) and (6.95,1.4) .. (10.93,3.29)   ;

\draw    (320.9,166.8) -- (460.08,230.21) ;
\draw [shift={(461.9,231.04)}, rotate = 204.49] [color={rgb, 255:red, 0; green, 0; blue, 0 }  ][line width=0.75]    (10.93,-3.29) .. controls (6.95,-1.4) and (3.31,-0.3) .. (0,0) .. controls (3.31,0.3) and (6.95,1.4) .. (10.93,3.29)   ;

\draw  [color={rgb, 255:red, 207; green, 125; blue, 125 }  ,draw opacity=1 ][fill={rgb, 255:red, 255; green, 0; blue, 0 }  ,fill opacity=0.2 ] (259.85,45.81) -- (324.08,87.72) -- (332.27,172.31) -- (268.03,130.41) -- cycle ;
\draw [color={rgb, 255:red, 255; green, 0; blue, 0 }  ,draw opacity=1 ]   (332.27,172.31) -- (324.27,89.71) ;
\draw [shift={(324.08,87.72)}, rotate = 444.47] [color={rgb, 255:red, 255; green, 0; blue, 0 }  ,draw opacity=1 ][line width=0.75]    (10.93,-3.29) .. controls (6.95,-1.4) and (3.31,-0.3) .. (0,0) .. controls (3.31,0.3) and (6.95,1.4) .. (10.93,3.29)   ;

\draw  [color={rgb, 255:red, 125; green, 125; blue, 207 }  ,draw opacity=1 ][fill={rgb, 255:red, 0; green, 0; blue, 255 }  ,fill opacity=0.2 ] (235.63,138.85) -- (268.03,130.41) -- (331.55,172.5) -- (299.14,180.94) -- cycle ;
\draw [color={rgb, 255:red, 0; green, 0; blue, 255 }  ,draw opacity=1 ]   (331.55,172.5) -- (301.08,180.43) ;
\draw [shift={(299.14,180.94)}, rotate = 345.4] [color={rgb, 255:red, 0; green, 0; blue, 255 }  ,draw opacity=1 ][line width=0.75]    (10.93,-3.29) .. controls (6.95,-1.4) and (3.31,-0.3) .. (0,0) .. controls (3.31,0.3) and (6.95,1.4) .. (10.93,3.29)   ;

\draw [color={rgb, 255:red, 255; green, 0; blue, 0 }  ,draw opacity=1 ]   (332.27,172.31) -- (269.71,131.5) ;
\draw [shift={(268.03,130.41)}, rotate = 393.12] [color={rgb, 255:red, 255; green, 0; blue, 0 }  ,draw opacity=1 ][line width=0.75]    (10.93,-3.29) .. controls (6.95,-1.4) and (3.31,-0.3) .. (0,0) .. controls (3.31,0.3) and (6.95,1.4) .. (10.93,3.29)   ;

\draw [color={rgb, 255:red, 168; green, 96; blue, 96 }  ,draw opacity=1 ]   (317.9,162.48) .. controls (318.9,157.48) and (324.9,152.48) .. (329.9,155.48) ;

\draw  [color={rgb, 255:red, 168; green, 96; blue, 96 }  ,draw opacity=1 ] (320.44,154.15) -- (325.07,154.81) -- (322.14,158.46) ;
\draw [color={rgb, 255:red, 95; green, 97; blue, 159 }  ,draw opacity=1 ]   (320.9,174.48) .. controls (318.9,172.48) and (319.9,168.48) .. (321.9,167.8) ;

\draw  [color={rgb, 255:red, 95; green, 97; blue, 159 }  ,draw opacity=1 ] (323.09,169.55) -- (319.41,172.44) -- (318.8,167.8) ;

\draw (258,117.04) node [color={rgb, 255:red, 0; green, 0; blue, 255 }  ,opacity=1 ] [align=left] {$\displaystyle \vec{a}$};
\draw (322,73.04) node [color={rgb, 255:red, 225; green, 0; blue, 0 }  ,opacity=1 ] [align=left] {$\displaystyle \vec{b}$};
\draw (287,183.04) node [color={rgb, 255:red, 0; green, 0; blue, 255 }  ,opacity=1 ] [align=left] {$\displaystyle \vec{c}$};
\draw (296.06,109.06) node [color={rgb, 255:red, 168; green, 96; blue, 96 }  ,opacity=1 ] [align=left] {{\huge +}};
\draw (284.59,158.67) node  [align=left] {{\huge \textbf{\textcolor[rgb]{0.37,0.38,0.62}{--}}}};

\end{tikzpicture}

\caption{An example orientation convention on the $xz$-plane}
\end{figure}

\section{The vector determinant}

The original historic motivation for the vector determinant Eq.~\eqref{vdet-rep} is not completely clear -- it may be described in a sense as an accidental discovery. When introduced in \cite{Pyle}, it was claimed that the vector determinant was definitionally the generalisation that satisfied the following properties:

\begin{itemize}
    \item Linearity in each column vector
    \item Irreflexivity, i.e. equals zero whenever two columns are equal
    \item The normalisation $\vdet(e_1, e_2 \ldots e_n)=(1,0,\ldots 0)$ or similar
\end{itemize}

\begin{equation}\label{vdet-rep}
\vdet A = \sum_{i_1<\ldots  i_n} \det A_{i_1,\ldots i_n} \vec{e}_{\tau(i_1,\ldots i_n)}
\end{equation}

However, no proof of uniqueness was presented, and in fact, a distinct generalisation satisfying these properties was presented later in \cite{Joshi}, and is recorded below in Eq.~\eqref{tdet}.

\begin{equation}\label{tdet}
\sum_{i_1<\ldots  i_n} \det A_{i_1,\ldots i_n}
\end{equation}

The key theorem relating to the vector determinant that makes it noteworthy has to do with determinants of products of the form $\det(A^T B)$ where $X$ and $Y$ have the same dimensions. We give the statement (and proof) of this theorem for real-valued matrices below.

\begin{thm}\label{thm:T2}
For two $m$ by $n$ matrices $A$ and $B$, the determinant of their square $n$ by $n$ inner product $A^T B$ is given by the dot product of their vector determinants:

\begin{equation*}
\det (A^T B) = (\vdet A)^T \vdet B
\end{equation*}

\end{thm}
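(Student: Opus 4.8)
The plan is to recognise this identity as a transpose-dressed form of the Cauchy--Binet formula and to prove it by the same exterior-algebraic expansion used for Theorem~\ref{thm:T1}. First I would dispose of the right-hand side: since the vectors $\vec{e}_{\tau(i_1,\ldots i_n)}$ form an orthonormal family, Eq.~\eqref{vdet-rep} gives at once
\[
(\vdet A)^T \vdet B = \sum_{i_1<\ldots i_n} \det A_{i_1,\ldots i_n}\,\det B_{i_1,\ldots i_n},
\]
a quantity manifestly independent of the bookkeeping bijection $\tau$. So the theorem reduces to the claim that $\det(A^T B)$ equals this sum of products of corresponding maximal minors of $A$ and $B$.

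For the left-hand side I would work in $\bigwedge^n\mathbb{R}^n\cong\mathbb{R}$, using that $\det M$ is the unique scalar with $\bigwedge_j (M\vec{e}_j) = \det M\cdot(\vec{e}_1\wedge\cdots\wedge\vec{e}_n)$. Writing $\vec{b}_1,\ldots,\vec{b}_n$ for the columns of $B$, the $j$-th column of $A^T B$ is $A^T\vec{b}_j$, so $\det(A^T B)\,(\vec{e}_1\wedge\cdots\wedge\vec{e}_n) = \bigwedge_j A^T\vec{b}_j$. I would then expand each $\vec{b}_j = \sum_i b_{ij}\vec{e}_i$, pull the sums and scalars out of the wedge by multilinearity, and use the key observation that $A^T\vec{e}_i$ is precisely the $i$-th row of $A$ viewed as a vector in $\mathbb{R}^n$. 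The resulting terms are indexed by multi-indices $(i_1,\ldots,i_n)$ and carry a factor equal to the wedge of rows $i_1,\ldots,i_n$ of $A$; grouping these by their ascending reordering $S = \{i_1<\cdots<i_n\}$ and pulling the sorting permutation's sign through the wedge turns the inner sum over $S_n$ into $\det A_S$ and the accompanying product of $b$-entries into $\det B_S$ --- exactly as in the proof of Theorem~\ref{thm:T1}. This yields $\det(A^T B) = \sum_{i_1<\ldots i_n}\det A_{i_1,\ldots i_n}\det B_{i_1,\ldots i_n}$, matching the right-hand side.

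The routine part is the multilinear expansion; the step requiring care --- and essentially the only obstacle --- is the sign bookkeeping when reordering a wedge of $n$ rows of $A$ into ascending order, together with the identification of a wedge of $n$ vectors in $\mathbb{R}^n$ with the determinant of the $n\times n$ matrix they form (the $n\times n$ instance of the property already invoked in Eq.~\eqref{exterior-prop}). It is also worth noting that the statement is automatically dimension-consistent: $A^T B$ is genuinely $n\times n$ so its determinant is defined, and when $m<n$ both sides vanish, since there are no ascending $n$-tuples of row indices and $A^T B$ has rank at most $m<n$. One could alternatively phrase the entire argument as the Gram-determinant identity $\langle \bigwedge_j\vec{a}_j,\,\bigwedge_j\vec{b}_j\rangle = \det(\vec{a}_i\cdot\vec{b}_j)$ for the induced inner product on $\bigwedge^n\mathbb{R}^m$, since by the computation in Theorem~\ref{thm:T1} the vector determinant is just $\bigwedge_j\vec{a}_j$ expressed in the orthonormal basis $\{\mathbf{e}_{i_1,\ldots i_n}\}$; but the direct expansion above is cleaner and uses nothing beyond what has already been established.
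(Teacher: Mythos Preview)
Your proof is correct and is essentially the same argument as the paper's: both write the $j$\textsuperscript{th} column of $A^T B$ as $\sum_i b_{ij}$ times the $i$\textsuperscript{th} row of $A$, expand by multilinearity over the resulting $m^n$ terms, and then group by ascending index tuples so that the inner permutation sum collapses to $\det B_{i_1,\ldots i_n}$ against the fixed $\det A_{i_1,\ldots i_n}$. The only cosmetic difference is that you carry the computation inside $\bigwedge^n\mathbb{R}^n$ and read off the determinant at the end, whereas the paper invokes the multilinearity of $\det$ directly; your added remarks on the $m<n$ case and the Gram-determinant reformulation are correct but not needed.
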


\begin{proof}
Consider the $j$\textsuperscript{th} column of $A^TB$, $(A^TB)^j$. One may write this as a sum of $m$ vectors as follows, where $i$ runs from 1 to $m$:

\begin{equation*}
(A^T B)^j = \sum_i B_i^jA_i
\end{equation*}

The determinant of $A^T B$ is then the determinant of a matrix whose columns are sums of simpler columns -- therefore we may inductively apply the multilinearity of the determinant in these sums to write (where $1\le i_j\le m$):

\begin{align*}
\det {A^T}B 
& = \sum_{i_1, \ldots i_n } \det (B_{i_1}^{1} A_{i_1}, \ldots B_{i_n}^n A_{i_n}) \\
& = \sum_{i_1,\ldots i_n} B_{i_1}^1 \ldots B_{i_n}^n \det A_{i_1,\ldots i_n}
\end{align*}

As in Theorem~\ref{thm:T1}, we group permutations of the same combination $i_1,\ldots i_n$ together -- under such a permutation $\sigma \in S_n$, the determinant of the submatrix of $A$ remains unchanged up to multiplication by the sign of the permutation $\sgn(\sigma)$:

\begin{align*}
\det {A^T}B 
& = \sum_{i_1 <  \ldots i_n} \sum_\sigma 
B_{i_{\sigma (1)}}^1 \ldots B_{i_{\sigma(n)}}^n 
\det A_{i_{\sigma(1)},\ldots i_{\sigma(n)}} \\
& = \sum_{i_1 <  \ldots i_n} \det A_{i_1,\ldots i_n} \sum_\sigma 
\sgn (\sigma) B_{i_{\sigma (1)}}^1 \ldots B_{i_{\sigma (n)}}^n \\
& = \sum_{i_1 < \ldots i_n} \det A_{i_1,\ldots i_n} \det B_{i_1,\ldots i_n}
\end{align*}

Which is precisely the dot product of $\vdet A$ and $\vdet B$.

\end{proof}

\subsection{Discussion}\label{sec:vdet-disc}
Theorem~\ref{thm:T2} provides a crucial insight into the inner product structure on the space of vector determinants in $\mathbb{R}^{\binom{m}{n}}$. In particular, it resolves the following two facts as special cases: (1) letting $A$ and $B$ be square, $\det(AB) = \det(A) \det(B)$, (2) letting $A$ and $B$ be equal, $\detl^2 A = |\vdet A|^2$ (the latter provides a geometric interpretation to the magnitude of the vector determinant).

A way to think about the vector determinant is that it is a ``vector representation'' to $\bigwedge_j A_j$, similar to how a cross product is a vector representation to the wedge product in three dimensions. In a sense, the vector determinant may be viewed as a ``signed'' version of the determinant-like function, but with a direction rather than a sign. For example, one may restate result Eq.~\eqref{detl-sol} as follows -- if the equation $Ax = b$ is solvable, its solutions $x_j$ can be written as the ratio between proportional vector determinants:

\begin{equation}\label{vdet-sol}
x_j \vdet A = \vdet (A_1,\ldots A_j,b,A_{j+1},\ldots A_n)
\end{equation}

If the system is not solvable, the vector determinants are not proportional, and Eq.~\eqref{vdet-sol} has no solution for any $j$.

\section{The g-determinant}\label{sec:gdet}

We provide an inductive proof of Eq.~\eqref{gdet} from the defining base case Eq.~\eqref{gdet-base} and Laplace recursion Eq.~\eqref{gdet-ih}.

\begin{thm}\label{thm:T3}
The following generalised determinant is the unique function on $m$ by $n$ satisfying Eq.~\eqref{gdet-base} for $m$ by 1 matrices and Laplace's expansion along the first column:

\begin{equation*}
\gdet A = \sum_{i_1<\ldots i_n}(-1)^{\sum_j i_j + j}\det A_{i_1,\ldots i_n}
\end{equation*}
\end{thm}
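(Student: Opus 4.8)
The plan is to prove Theorem~\ref{thm:T3} by induction on $n$, the number of columns, showing that the closed-form expression satisfies the base case and the Laplace recursion, and then arguing uniqueness separately (which is immediate, since the base case together with the recursion determines the function on every $m$ by $n$ matrix). The base case $n=1$ is just a matter of reading off Eq.~\eqref{gdet-base}: for an $m$ by $1$ matrix the only ascending $1$-tuples are the singletons $(i_1)$ with $1 \le i_1 \le m$, the submatrix $A_{i_1}$ is the $1\times 1$ matrix $(a_{i_1 1})$, and the sign $(-1)^{i_1 + 1}$ matches, so $\sum_{i_1}(-1)^{i_1+1}a_{i_1 1} = a_1 - a_2 + \cdots + (-1)^{m+1}a_m$ as required.

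For the inductive step, I would assume the formula holds for $m' $ by $(n-1)$ matrices for all $m'$, and expand $\gdet A$ for an $m$ by $n$ matrix along the first column using Eq.~\eqref{gdet-ih}. The minor $A_{\{i\}^C}^{\{1\}^C}$ is an $(m-1)$ by $(n-1)$ matrix, so by the inductive hypothesis its g-determinant is a sum over ascending $(n-1)$-tuples of rows drawn from $\{1,\ldots,m\}\setminus\{i\}$. The key bookkeeping step is to re-index: an ascending $(n-1)$-tuple of rows of the minor, once we account for the deleted row $i$, together with the choice of $i$ as the ``first'' row, assembles into an ascending $n$-tuple $i_1 < \cdots < i_n$ of rows of $A$, where $i = i_1$ (since we expand along the first \emph{column}, the row $i$ pairs with column $1$, which is the smallest column index, so in the reassembled square submatrix $A_{i_1,\ldots,i_n}$ the row $i$ naturally becomes $i_1$). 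Standard Laplace expansion of $\det A_{i_1,\ldots,i_n}$ along its first column recovers the minors, and the product of the two sign factors — the $(-1)^{i+1}$ from Eq.~\eqref{gdet-ih}, the sign from the inductive hypothesis applied to the minor, and the positional sign incurred when a row index of the minor is shifted past the deleted row $i$ — must be shown to collapse to $(-1)^{\sum_j i_j + j}$.

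The main obstacle is precisely this sign reconciliation. The subtlety is that the inductive hypothesis produces exponents $\sum_j i'_j + j$ computed relative to the \emph{renumbered} rows of the minor (i.e., rows above $i$ keep their label, rows below $i$ are shifted down by one), whereas the target exponent is computed relative to the original row labels in $A$. I would handle this by writing, for $k \ge 2$, the renumbered index as $i_k$ if $i_k < i_1$ is impossible (it never is, since $i_1$ is smallest) — more carefully, since $i = i_1$ is the smallest of the $i_j$, every other row $i_k$ lies above the deleted row, so the renumbering shifts $i_k \mapsto i_k - 1$ for each $k \ge 2$. Then the minor's exponent is $\sum_{k=2}^{n}\big((i_k - 1) + (k-1)\big) = \sum_{k=2}^n (i_k + k) - 2(n-1) = \sum_{k=2}^n(i_k+k) \pmod 2$, and adding the $(-1)^{i_1+1}$ factor gives $i_1 + 1 + \sum_{k=2}^n(i_k+k) = \sum_{k=1}^n(i_k+k) \pmod 2$, which is exactly $(-1)^{\sum_j i_j + j}$. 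One must also check that the Laplace expansion of $\det A_{i_1,\ldots,i_n}$ along its first column contributes the correct internal sign for the first row, namely $(-1)^{1+1} = +1$, which it does since $i_1$ occupies the first row of the submatrix — so no extra sign is introduced there.

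Once the sign identity checks out, the double sum over $i$ and over ascending $(n-1)$-tuples of the minor reorganizes, term by term, into the single sum $\sum_{i_1 < \cdots < i_n}(-1)^{\sum_j i_j + j}\det A_{i_1,\ldots,i_n}$, completing the induction. Uniqueness requires no separate argument: Eq.~\eqref{gdet-base} pins down the function on $m$ by $1$ matrices, and Eq.~\eqref{gdet-ih} expresses the value on any $m$ by $n$ matrix in terms of values on $(m-1)$ by $(n-1)$ matrices, so by the same induction any function satisfying both must coincide with the displayed expression. I would close by remarking that the alternating sign in the base case — rather than the all-ones pattern one might naively expect — is exactly what is forced by demanding consistency with Laplace's expansion, which is the conceptual content of the theorem.
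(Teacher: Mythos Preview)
Your inductive strategy and the base case are fine, but the re-indexing in the inductive step contains a genuine error. You assert that the expansion row $i$ from Eq.~\eqref{gdet-ih} ``naturally becomes $i_1$'', the smallest index of the resulting ascending $n$-tuple. This is false: in Eq.~\eqref{gdet-ih} the row $i$ ranges over all of $1,\ldots,m$, and for a fixed $i$ the $(n-1)$ rows selected from the minor $A_{\{i\}^C}^{\{1\}^C}$ may lie on either side of $i$ in the original numbering. The fact that $i$ is paired with \emph{column} $1$ says nothing about where $i$ sits among the \emph{row} indices. Consequently your renumbering rule $i_k \mapsto i_k - 1$ for all $k\ge 2$ is wrong in general (it only applies to those $i_k$ that exceed $i$), and your sign computation, while internally consistent, rests on a hypothesis that does not hold.

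The deeper issue is that the correspondence you set up is not a bijection but an $n$-to-$1$ map: a given ascending $n$-tuple $i_1<\cdots<i_n$ arises from exactly $n$ pairs $(i,\text{tuple in minor})$, one for each choice of $i = i_k$. What you are missing is that these $n$ contributions, once the signs are tracked correctly, sum to the Laplace expansion of $\det A_{i_1,\ldots,i_n}$ along \emph{its} first column --- all $n$ terms of it, not just the first-row term you isolate. This grouping is precisely how the paper proceeds: it collects all terms with the same underlying row set $\{i_1,\ldots,i_n\}$, observes that moving the expansion row $i=i_k$ into position introduces a factor $(-1)^{k+1}$, and recognises the resulting inner sum $\sum_{k=1}^n (-1)^{k+1} a_{i_k 1}\det A^{\{1\}^C}_{\{i_{j\ne k}\}}$ as $\det A_{i_1,\ldots,i_n}$. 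Your argument recovers only the $k=1$ summand of each such group and therefore does not reconstruct the full square determinant.
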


\begin{proof}
The base case is clear. It suffices (by diagonal induction from the $n = 1$ line) to show that the statement for $(m - 1) \times (n - 1)$ matrices implies the statement for $m \times n$ matrices. Expanding the $(m-1)\times (n -1)$ g-determinants in the Laplace expansion,

\begin{align*}
\gdet A 
&= \sum_{i_1 = 1}^m (-1)^{i_1 + 1} a_{i_1 1} 
\sum_{i_2 < \ldots i_n \ne i_1} (-1)^{\sum\limits_{j = 2}^n (i_j + j)} \det A_{i_2,\ldots i_n}^{2,\ldots n} \\
&= \sum_{i_1 = 1}^m \sum_{i_2 < \ldots i_n \ne i_1} (-1)^{\sum\limits_{j = 1}^n (i_j + j)} a_{i_1 1} \det A_{i_2,\ldots i_n}^{2,\ldots n}
\end{align*}

We group all permutations of a given set $i_1, i_2,... i_n$ together, allowing us to write each term with ascending $i_j$ -- the determinant terms in each such case are multiplied by $(-1)^{k+1}$ in bringing the rows of the submatrix to their natural order:

\begin{align*}
\gdet A 
&= \sum_{i_1 < \ldots i_n} (-1)^{\sum_j i_j + j} \sum_{k = 1}^n 
(-1)^{k + 1} a_{i_k 1} \det A_{\{i_{j\ne k}\}}^{\{k\}^C} \\
&= \sum_{i_1 < \ldots i_n} (-1)^{\sum_j i_j + j} \det A_{i_1,...i_n}
\end{align*}
Uniqueness follows from the well-foundedness of the recursion.
\end{proof}

\subsection{Discussion}

It's worth noting that in order to preserve parallels with the other generalised determinants, we have used the opposite convention to \cite{Radic1} with regards to the rows and columns of the matrix -- our g-determinant is the g-determinant of the transpose as per the original convention.

Although the g-determinant does not have a conventional geometric interpretation in terms of scaling of volumes under a non-square transformation, the $m$ by 2 g-determinant does have a rather unique geometric application in relation to the areas of polygons -- explicitly, the area of an $m$-vertex polygon in $\mathbb{R}^2$ whose vertices are given by the column vectors $A_i$ is given by

\begin{equation}\label{gdet-poly}
[A_1,...A_m] = \frac12 \gdet (A_1 + A_2, A_2 + A_3, \ldots A_m + A_1)^T
\end{equation}

We state the above result without proof, as it follows in a straightforward fashion from the shoelace formula \cite{Radic2}. Notably, it is easy to show that in the continuous limit with $m \to \infty$, Eq.~\eqref{gdet-poly} reduces to the standard expression for the area contained within a curve:

\begin{equation}\label{gdet-curve}
[C] = \frac12 \int_C x\, dy - y \, dx
\end{equation}

The proof does not differ much from a standard continuous generalisation of the shoelace formula \cite{NOSEAL}, and is left as an exercise to the reader.

\section{Conclusion}

We have discussed the properties of three significant tall-matrix determinants, as well as some insight into their geometric interpretations. Our treatment is by no means comprehensive -- notably, we did not discuss the generalisation in \cite{Joshi}, with the exception of a brief statement of its definition in Eq.~\eqref{tdet}, as there is relatively less literature on this generalisation, and its known properties are mostly elementary.

The g-determinant is perhaps the most well-studied of the generalised determinants, and we have only covered a selection of its properties, including of its geometric properties. An interested reader might wish to consult references \cite{Radic1} \cite{Radic2} \cite{Radic3} for a deeper look at the research in this area.

Although we have covered the vector determinant as a vector in $\mathbb{R}^{\binom{m}{n}}$ in agreement with the literature, it may be more revealing to consider it simply as an $n$-vector in $\mathbb{R}^m$. All its discussed properties of the generalisation are preserved under this interpretation -- additionally, one obtains a rather natural way to think about the signs of each component determinant as the signs of the projections of the determinant multivector onto basis $n$-forms. The projection (and its sign) onto an arbitrary $n$-dimensional subspace can be obtained via an inner product with a unit $n$-form on that subspace -- this ``directional determinant'' can be said to be analogous to directional derivatives in analysis.

It is not clear if the rather similar roles of the determinants of the square submatrices in the various generalised determinants may offer any further insight, or if there is any fundamental relation between the g-determinant, the vector determinant and the determinant in \cite{Joshi}.


\end{document}